\newtheorem{theorem}{Theorem}[section]
\newtheorem{lemma}[theorem]{Lemma}
\theoremstyle{definition}
\newtheorem{remark}[theorem]{Remark}
\numberwithin{equation}{section}
\begin{document}
\Large
\title[Maps completely preserving the quadratic operators]{Maps completely preserving the quadratic operators}

\author[ Roja Hosseinzadeh]{Roja Hosseinzadeh}

\address{{ Department of Mathematics, Faculty of Mathematical Sciences,
 University of Mazandaran, P. O. Box 47416-1468, Babolsar, Iran.}}

\email{ro.hosseinzadeh@umz.ac.ir}

\subjclass[2000]{46J10, 47B48}

\keywords{Completely preserving problem, Quadratic operator, Operator algebra.}

\begin{abstract}\large
Let $\mathcal{A}$ and $\mathcal{B}$ be standard operator algebras on Banach spaces $\mathcal{X}$ and $\mathcal{Y}$, respectively. In this paper, we show that every bijection completely preserving quadratic operators from $\mathcal{A}$ onto $\mathcal{B}$ is either an isomorphism or (in the complex case) a conjugate isomorphism.
\end{abstract} \maketitle

\section{Introduction}
\noindent

The study of maps on operator algebras
preserving certain properties or subsets is a subject that has attracted the attention of many mathematicians.
To get acquainted with this topic and as an introduction to it, you can see reference \cite{8} and references inside it.
Recently, some of these problems have been related to the completely preserving of certain properties or subsets of operators. For example see $[2-7]$.

Let $\mathcal{B(X)}$ be the algebra of all
bounded linear operators on a Banach space $\mathcal{X}$. Recall that an operator $T \in \mathcal{B(X)}$ is quadratic if there exist two complex
numbers $a, b \in \mathbb{C}$, such that $(T-aI)(T -bI) = 0$.

Let $ \phi : \mathcal{M} \rightarrow \mathcal{N}$ be a map, where $ \mathcal{M}$ and $ \mathcal{N}$ are linear spaces. Define for each $n \in \mathbb{N} $, a map $ \phi _n : \mathcal{M} \otimes M_n \rightarrow \mathcal{N} \otimes M_n$ by
$$ \phi _n ((a_{ij})_{n \times n})=( \phi (a_{ij}))_{n \times n}.$$
Then $ \phi $ is said to be $n$-quadratic preserving if $ \phi _n $ preserves quadratic operators. $ \phi $ is said to be completely quadratic preserving if $ \phi _n $ preserves quadratic operators for each $n \in \mathbb{N} $.

In \cite{2}, authors characterized completely rank-nonincreasing linear maps and then later extended on \cite{3}.
Completely invertibility preserving maps were characterized in \cite{5}. Subsequently, in \cite{6} completely idempotent and completely
square-zero preserving maps and in \cite{7} completely commutativity and
completely Jordan zero product preserving maps were discussed.

Recall that a standard operator
algebra on $\mathcal{X}$ is a norm closed subalgebra of $\mathcal{B(X)}$ which
contains the identity and all finite rank operators. Our main
results is as follows.

\begin{theorem} Let $\mathcal{X}$, $\mathcal{Y}$ be infinite dimensional Banach spaces and $\mathcal{A}$ and $\mathcal{B}$ be standard operator algebras on $\mathcal{X}$ and $\mathcal{Y}$, respectively. Let $\phi: \mathcal{A} \rightarrow \mathcal{B}$ be a bijective map. Then the following statements are equivalent:

$(1)$ $\phi$ is completely preserving the quadratic operators in both directions.

$(2)$ $\phi$ is 2-quadratic preserving operators in both directions.

$(3)$ There exist a bounded invertible linear or (in the complex case) conjugate-
linear operator $A : \mathcal{X} \rightarrow \mathcal{Y}$ and a scalar $ \lambda $ such that
$$ \phi(T ) = \lambda ATA^{-1},$$
 for all $T \in \mathcal{A}$.
 \end{theorem}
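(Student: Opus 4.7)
The implications $(1) \Rightarrow (2)$ and $(3) \Rightarrow (1)$ are essentially bookkeeping: the first is immediate, and for the second, if $\phi(T) = \lambda ATA^{-1}$ with $A$ linear, then each $\phi_n$ is scalar multiplication by $\lambda$ composed with conjugation by $I_n \otimes A$, which preserves minimal polynomials and hence quadraticness; the conjugate-linear case is the same except annihilating roots get conjugated, $(T-aI)(T-bI)=0$ becoming $(\phi(T) - \lambda \bar a I)(\phi(T) - \lambda \bar b I) = 0$. So the entire substance lies in $(2) \Rightarrow (3)$, which I sketch below.

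The first ingredient is that the class of quadratic operators is rich enough to probe the structure of $\phi$: it contains the scalars, the idempotents, the square-zero operators, and every rank-one operator, since $(u \otimes f)^2 = f(u)(u\otimes f)$. The second ingredient is a pair of block identities obtained by direct computation. First, $\begin{pmatrix} T & S \\ 0 & 0 \end{pmatrix}$ is quadratic iff there exists a scalar $c$ with $T^2 = cT$ and $TS = cS$; in particular $\begin{pmatrix} T & T \\ 0 & 0 \end{pmatrix}$ is quadratic iff $T^2 = cT$. Second, $\begin{pmatrix} T & 0 \\ 0 & S \end{pmatrix}$ is quadratic iff $T$ and $S$ admit a common quadratic annihilator. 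Since $\phi_2$ and $\phi_2^{-1}$ preserve quadraticness by hypothesis, these block conditions are transported between $\mathcal{A}$ and $\mathcal{B}$ and yield algebraic relations on $\phi$-images.

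Following the pattern used for completely idempotent and completely commutativity preservers in \cite{6, 7}, the plan is to proceed in four substeps. First, using the second block identity with pairs $(P, I - P)$ for idempotents $P$ and with $(0,0)$, determine that $\phi(0) = 0$ and $\phi(I) = \mu I$ for a nonzero scalar $\mu$, and normalize to $\psi := \mu^{-1} \phi$, so $\psi(0) = 0$, $\psi(I) = I$. Second, exploit the first block identity to show $\psi$ sends rank-one operators to rank-one operators: the constraint $TS = cS$ with $T$ a scalar multiple of a rank-one idempotent forces the analogous constraint on $\psi(T)$, which in turn pins down its rank. Third, establish preservation of rank-one idempotents and then additivity of $\psi$ on rank-one operators by running the block identities on matrices that encode sums. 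Fourth, extend additivity and multiplicativity to all of $\mathcal{A}$ by the usual density-in-finite-rank arguments, obtaining a unital ring isomorphism $\psi: \mathcal{A} \to \mathcal{B}$.

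The decisive obstacle is the third substep: prying additivity out of the purely relational conditions $T^2 = cT$, $TS = cS$, etc., while simultaneously handling the two flavors of quadratic operator — the scalar-plus-square-zero type $aI + N$ and the scalar-plus-idempotent type $aI + cP$ — without collapsing them. Once the ring isomorphism $\psi$ is in hand, the classical theorem on ring isomorphisms between standard operator algebras on infinite-dimensional Banach spaces yields a bounded invertible $A: \mathcal{X} \to \mathcal{Y}$, linear or in the complex case conjugate-linear, with $\psi(T) = ATA^{-1}$; the linear/conjugate-linear dichotomy is settled at the end by tracing how $\psi$ acts on complex scalar multiples of a fixed rank-one idempotent. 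Absorbing $\mu$ recovers $\phi(T) = \lambda ATA^{-1}$ as required.
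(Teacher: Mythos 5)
Your outline of the easy implications $(1)\Rightarrow(2)$ and $(3)\Rightarrow(1)$ is fine, and the two block identities you state (the quadraticity of $\begin{pmatrix} T & S \\ 0 & 0 \end{pmatrix}$ being equivalent to $T^2=cT$, $TS=cS$ for a common scalar $c$, and the diagonal-block criterion) are correct and are indeed the kind of computations the argument runs on. But what you have written for $(2)\Rightarrow(3)$ is a plan, not a proof, and the plan has a genuine hole exactly where you flag it. Substep 2 asserts that the relation $TS=cS$, transported through $\phi_2$, ``pins down the rank'' of $\psi(T)$ --- but you never exhibit a characterization of rank-one operators expressible purely in terms of which $2\times 2$ block matrices are quadratic, and it is not clear one of the form you describe exists (for $T=u\otimes f$ the condition $TS=cS$ holds only for special $S$, not all $S$, so quantifying over $S$ does not obviously isolate rank one). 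Substep 3, additivity on rank-one operators, is the entire difficulty of the theorem: the hypothesis is purely relational/spectral and does not linearize in any evident way, and you explicitly concede you do not know how to do it. Substeps 4 and the final appeal to the ring-isomorphism theorem all depend on 3. So as it stands nothing past the normalization $\psi(0)=0$, $\psi(I)=I$ is established.

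For comparison, the paper never proves additivity at all; it takes a route that sidesteps it. After the normalization step (which matches your substep 1), it shows $\phi$ preserves idempotents in both directions (using quadraticity of $\begin{pmatrix} P & 0 \\ 0 & 0\end{pmatrix}$ and $\begin{pmatrix} P & 0 \\ 0 & I\end{pmatrix}$), then that it preserves orthogonality of idempotents (using quadraticity of $\begin{pmatrix} I & P \\ Q & -I \end{pmatrix}$), and then invokes a known structure theorem for orthogonality-preserving bijections of the idempotent poset to get $\phi(P)=APA^{-1}$ on all idempotents, ruling out the adjoint form by a separate block computation. From there, the quadratic matrices $\begin{pmatrix} T & I \\ I-T^2 & -T\end{pmatrix}$ and $\begin{pmatrix} I-TS & -T \\ STS-2S & ST-I\end{pmatrix}$ with $S$ rank one yield that $\phi(T)x$ and $Tx$ are everywhere linearly dependent, whence the Bre\v{s}ar--\v{S}emrl theorem on locally linearly dependent operators gives $\phi(T)=\lambda_T T$; a final computation with the square-zero matrix $\begin{pmatrix} TS & T \\ -STS & -ST\end{pmatrix}$ forces $\lambda_T=1$. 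If you want to salvage your approach, you would either have to supply the missing additivity argument (for which I know of no direct route from these hypotheses) or redirect substeps 2--3 toward the idempotent ortho-lattice and local linear dependence as above.
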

\begin{theorem}  Let $\phi: \mathcal{M}_n( \mathbb{F} ) \rightarrow \mathcal{M}_n( \mathbb{F} )$ $(n \geq 3)$ be a bijective map, where $\mathbb{F}$ is the real or complex field. Then the following statements are equivalent:

$(1)$ $\phi$ is completely preserving the quadratic operators in both directions.

$(2)$ $\phi$ is 2-quadratic preserving operators in both directions.

$(3)$ There exist an invertible matrix $A \in \mathcal{M}_n$, a scalar $ \lambda $ and an automorphism $ \tau :\mathbb{F} \rightarrow \mathbb{F}$ such that
$$ \phi(T ) = \lambda AT_{\tau}A^{-1},$$
 for all $T \in \mathcal{M}_n( \mathbb{F} )$. Here $T_{\tau}=( \tau (t_{ij}))$ for $T=(t_{ij})$.
\end{theorem}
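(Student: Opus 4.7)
The implications $(3) \Rightarrow (1) \Rightarrow (2)$ are the routine half. If $\phi(T) = \lambda A T_{\tau} A^{-1}$, then for each $k \geq 1$ the amplification $\phi_k$ on $\mathcal{M}_n(\mathbb{F}) \otimes M_k \cong \mathcal{M}_{nk}(\mathbb{F})$ is the composition of the entrywise field twist induced by $\tau$, conjugation by $I_k \otimes A$, and multiplication by $\lambda$; each of these operations sends a quadratic operator with roots $a, b$ to a quadratic operator with roots $\lambda\tau(a), \lambda\tau(b)$, and $\phi^{-1}$ has the same form, so quadraticity is preserved in both directions. The implication $(1) \Rightarrow (2)$ is immediate.

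For $(2) \Rightarrow (3)$ the plan is to run a finite-dimensional analogue of the argument for Theorem 1.1, this time allowing a non-continuous field automorphism to appear. First I would normalize: scalar operators $\mu I$ are quadratic, and 2-preservation applied to block diagonal matrices pins down $\phi(I)$ as a scalar multiple of $I$, so after rescaling one may assume $\phi(I) = I$ and $\phi(0) = 0$. Next I would show $\phi$ preserves idempotents, using 2-preservation to lift a single idempotent $P$ to a test $2 \times 2$ block matrix such as $\bigl(\begin{smallmatrix} P & P \\ 0 & 0 \end{smallmatrix}\bigr)$: the quadraticity of its $\phi_2$-image, together with the normalization on scalars, forces the roots $(a,b)$ of $\phi(P)$ to be $(0,1)$. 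To promote this to preservation of rank-one idempotents I would exploit test matrices built from matrix units such as $\bigl(\begin{smallmatrix} E_{11} & E_{12} \\ 0 & 0 \end{smallmatrix}\bigr)$; these are idempotent, whereas a direct calculation shows that their entrywise transposes $\bigl(\begin{smallmatrix} E_{11} & E_{21} \\ 0 & 0 \end{smallmatrix}\bigr)$ are not quadratic, which both controls rank and simultaneously rules out the orientation-reversing alternative. Once $\phi$ is known to permute rank-one idempotents bijectively, the classical Hua-type structure theorem for such bijections of $\mathcal{M}_n(\mathbb{F})$ with $n \geq 3$ yields the stated representation $\phi(T) = \lambda A T_{\tau} A^{-1}$.

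The principal obstacle is the passage from \emph{preserves quadratic operators} to \emph{preserves idempotents}. A quadratic operator carries two free spectral parameters, and the bare statement that $\phi(P)$ is quadratic only says that $\phi(P)$ annihilates some degree-two polynomial, not that the polynomial is $x(x-1)$. Overcoming this requires combining 2-preservation in both directions with block-matrix identities that couple the pairs $(a,b)$ associated to $\phi(P)$ for several mutually related idempotents $P$, so that the resulting system of spectral constraints collapses to $(a,b) = (0,1)$. After this hurdle the remaining steps are fairly standard: rank-one preservation via the matrix-unit block examples above, followed by the classical classification of bijections of $\mathcal{M}_n(\mathbb{F})$ preserving rank-one idempotents.
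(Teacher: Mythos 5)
Your skeleton (normalize, pass to idempotents, invoke a structure theorem on idempotents, rule out the transpose alternative) is the same as the paper's, which proves Theorem 1.2 by rerunning the machinery of Theorem 1.1 with Ovchinnikov's theorem on the poset of idempotents in place of the Banach-space lemma. But there are two genuine gaps. First, your test matrix $\bigl(\begin{smallmatrix} P & P \\ 0 & 0\end{smallmatrix}\bigr)$ does not force $\phi(P)$ to be idempotent: writing $Q=\phi(P)$, quadraticity of $\bigl(\begin{smallmatrix} Q & Q \\ 0 & 0\end{smallmatrix}\bigr)$ with roots $a,b$ only yields $ab=0$ and $Q^2=(a+b)Q$, i.e.\ $Q$ is a scalar multiple of an idempotent. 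You flag this as ``the principal obstacle'' but never close it; the paper closes it by coupling the two test matrices $\operatorname{diag}(P,0)$ and $\operatorname{diag}(P,I)$, whose spectral pairs together force $a+b=1$ (using that a non-trivial idempotent cannot map to a scalar). Relatedly, your matrix $\bigl(\begin{smallmatrix} E_{11} & E_{12} \\ 0 & 0\end{smallmatrix}\bigr)$ is unusable at the stage you deploy it: $E_{12}$ is nilpotent, not idempotent, so $\phi(E_{12})$ is not yet known to be $A(E_{12})_\tau A^{-1}$ or its transpose; the paper's analogous test matrix is built entirely from (signed) idempotents for exactly this reason.

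The larger gap is the final step. Knowing that $\phi$ permutes rank-one idempotents bijectively yields nothing by itself: no structure theorem applies to a bare bijection of the set of rank-one idempotents, and even the full Ovchinnikov/Hua-type result requires $\phi$ to preserve orthogonality (or the partial order) of idempotents in both directions --- a lemma you never establish (the paper derives it from quadraticity of $\bigl(\begin{smallmatrix} I & P \\ Q & -I\end{smallmatrix}\bigr)$). More importantly, that theorem only determines $\phi$ \emph{on idempotents}; a bijection of $\mathcal{M}_n(\mathbb{F})$ agreeing with $T\mapsto AT_\tau A^{-1}$ on idempotents can be arbitrary elsewhere. The entire second half of the argument --- extending from idempotents to all matrices via the block identities
$$\begin{pmatrix} T & I \\ I-T^2 & -T\end{pmatrix}^2=I,\qquad \begin{pmatrix} I-TS & -T \\ STS-2S & ST-I\end{pmatrix}^2=I,\qquad \begin{pmatrix} TS & T \\ -STS & -ST\end{pmatrix}^2=0,$$
together with the Bre\v{s}ar--\v{S}emrl theorem on locally linearly dependent operators to first get $\phi(T)=\lambda_T T$ and then kill the scalars $\lambda_T$ --- is missing from your proposal entirely, and it is the heart of the proof.
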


\section{Proofs}

Denote by $\mathcal{X}^ *$ the dual space of $\mathcal{X}$. For every nonzero $x\in \mathcal{X}$ and
$f\in \mathcal{X}^ *$, the symbol $x\otimes f$ stands for the rank one
linear operator on $\mathcal{X}$ defined by $(x\otimes {f})y=f(y)x$ for any
$y\in \mathcal{X}$. Note that every rank one operator in $\mathcal{B(X)}$ can be
written in this way. The rank one operator $x\otimes f$ is
idempotent (resp. nilpotent) if and only if $f(x)=1$ (resp. $f(x)=0$).
Let $P$ and $Q$ be two idempotent operators. We say that $P$ and $Q$ are orthogonal if and only if
$PQ=QP=0$. Let $\mathcal{A}$ be a standard operator
algebra on $\mathcal{X}$. Denote by $\mathcal{P}_{ \mathcal{A}}$ the set of all idempotent in $\mathcal{A}$.
 In order to prove the main results, we need to some auxiliary lemmas.

\begin{lemma}
$\phi(0)=0$ and $ \phi ( \mathbb{C}I)= \mathbb{C}I$, specially $ \phi (I)= \lambda I$ for some constant $\lambda$.
\label{2.1}
\end{lemma}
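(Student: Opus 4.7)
The plan is to exploit the hypothesis that $\phi$ (and hence $\phi^{-1}$) is $2$-quadratic preserving by applying $\phi_2$ to carefully chosen $2\times 2$ block matrices.

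I first show $\phi(0)=0$. Set $\alpha:=\phi(0)$. For every $X\in\mathcal{A}$ the matrix $\begin{pmatrix} 0 & X \\ 0 & 0 \end{pmatrix}$ is square-zero, hence quadratic in $M_2(\mathcal{A})$, so $2$-quadratic preservation forces $M(Y):=\begin{pmatrix} \alpha & Y \\ \alpha & \alpha \end{pmatrix}$ to be quadratic in $M_2(\mathcal{B})$ for every $Y=\phi(X)$; by bijectivity of $\phi$, this holds for every $Y\in\mathcal{B}$. Expanding $M(Y)^2-(a+b)M(Y)+ab I_2=0$ and subtracting the $(2,2)$-block from the $(1,1)$-block gives $Y\alpha-\alpha Y=0$. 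Since $\mathcal{B}$ is a standard operator algebra on an infinite-dimensional Banach space, its center is $\mathbb{C}I$, so $\alpha=cI$ for some $c\in\mathbb{C}$. Plugging $\alpha=cI$ into the $(2,1)$-block yields $2c^2=(a+b)c$ and into the $(1,1)$-block yields $cY=-(c-a)(c-b)I$. If $c\ne 0$, this forces every $Y\in\mathcal{B}$ to be a scalar multiple of $I$, contradicting the presence of non-scalar operators in $\mathcal{B}$. Thus $c=0$, i.e.\ $\phi(0)=0$.

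Next I show $\phi(\mathbb{C}I)=\mathbb{C}I$ by means of the characterization: $T\in\mathbb{C}I$ if and only if $\begin{pmatrix} T & X \\ 0 & T \end{pmatrix}$ is quadratic in $M_2(\mathcal{A})$ for every $X\in\mathcal{A}$. The forward direction is immediate, since $(M-cI_2)^2=\begin{pmatrix} 0 & X \\ 0 & 0 \end{pmatrix}^2=0$ when $T=cI$. The converse is obtained by taking $X=I$ and reading the $(1,2)$-block of $(M-aI_2)(M-bI_2)=0$, which gives $2T=(a+b)I$. Applying $\phi_2$ and using the already-established $\phi(0)=0$, the transported matrix becomes $\begin{pmatrix} \phi(T) & \phi(X) \\ 0 & \phi(T) \end{pmatrix}$; as $X$ varies over $\mathcal{A}$, bijectivity makes $\phi(X)$ vary over all of $\mathcal{B}$, so the analogous characterization in $M_2(\mathcal{B})$ forces $\phi(T)\in\mathbb{C}I$. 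Applying the argument to $\phi^{-1}$ gives the reverse inclusion, so $\phi(\mathbb{C}I)=\mathbb{C}I$; in particular $\phi(I)=\lambda I$ for some $\lambda$.

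The main insight is in the first step: letting the off-diagonal entry $X$ range over all of $\mathcal{A}$, bijectivity produces an $M(Y)$-quadratic condition for every $Y\in\mathcal{B}$, and this yields the commutation $Y\alpha=\alpha Y$ that pins $\alpha$ to the center of $\mathcal{B}$. I do not expect any further obstacle beyond the routine block-matrix algebra.
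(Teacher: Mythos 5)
Your proposal is correct, and it follows the same general technique as the paper (test quadraticity of $\phi_2$ on hand-picked $2\times 2$ block matrices and read off block equations), but the route differs in a worthwhile way for the $\phi(0)=0$ step. The paper uses the single matrix $\begin{pmatrix} cI & T \\ 0 & 0 \end{pmatrix}$ and surjectivity to pick preimages $T_0,T_1$ of $0$ and $I$, extracting both $\phi(0)=0$ and $\phi(cI)\in\mathbb{C}I$ from the resulting two block identities; a delicate point there is that the quadratic constants $a,b$ depend on $T$, and the paper reuses the relation $(\phi(cI)-aI)(\phi(cI)-bI)=0$, obtained for the particular $T_0$, when passing to general $T$. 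Your treatment of $\phi(0)=0$ sidesteps exactly this issue: applying $\phi_2$ to the square-zero matrices $\begin{pmatrix} 0 & X \\ 0 & 0 \end{pmatrix}$ and subtracting the $(2,2)$-block from the $(1,1)$-block cancels the unknown, $Y$-dependent constants and yields $\alpha Y=Y\alpha$ for all $Y\in\mathcal{B}$, so $\alpha=\phi(0)$ lies in the center $\mathbb{C}I$ of the standard algebra, and the $(1,1)$-block then kills the scalar; this is a cleaner and more robust argument than the paper's. Your second step, using $\begin{pmatrix} cI & X \\ 0 & cI \end{pmatrix}$, choosing $X$ with $\phi(X)=I$ by surjectivity, and reading the $(1,2)$-block to get $2\phi(cI)=(a+b)I$, is essentially the paper's argument for the scalar part (the paper reads the same off-diagonal entry with $T_1$), and the passage to equality $\phi(\mathbb{C}I)=\mathbb{C}I$ via $\phi^{-1}$ is identical in both. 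In short: same toolbox, different witness matrix for $\phi(0)=0$, and your version buys a tighter handling of the constant-dependence that the paper glosses over, at the cost of being slightly longer.
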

\begin{proof}  Let $ c \in \mathbb{C}$ be a nonzero number and $T$ be an arbitrary operator. Set
 $$ A=\begin{pmatrix}
     cI & T \\
     0 & 0 \\
   \end{pmatrix}.$$
   It is clear that $A(A- \begin{pmatrix}
     cI & 0 \\
     0 & cI \\
   \end{pmatrix})=0$ and then there exist constants $a,b$ such that $ (\phi _2(A)- \begin{pmatrix}
     aI & 0 \\
     0 & aI \\
   \end{pmatrix})( \phi _2(A)- \begin{pmatrix}
     bI & 0 \\
     0 & bI \\
   \end{pmatrix})=0$. This implies
$$ \begin{pmatrix}
     \phi(cI)-aI & \phi(T) \\
     \phi(0) & \phi(0)-aI \\
   \end{pmatrix} \begin{pmatrix}
     \phi(cI)-bI & \phi(T) \\
     \phi(0) & \phi(0)-bI \\
   \end{pmatrix}=\begin{pmatrix}
     0 & 0 \\
     0 & 0 \\
   \end{pmatrix}$$
and hence
\begin{eqnarray}
 ( \phi(cI)-aI)( \phi (cI)-bI)+ \phi(T) \phi (0)=0,
\end{eqnarray}
\begin{eqnarray}
 ( \phi(cI)-aI)\phi(T)+\phi(T)( \phi (cI)-bI)=0.
\end{eqnarray}
Surjectivity of $\phi$ yields that there exist $T_0, T_1 \in \mathcal{A}$ such that $\phi(T_0)=0$ and $\phi(T_1)=I$. Taking $T=T_0$ in Equation $(2.1)$ yields that $( \phi(cI)-aI)( \phi (cI)-bI)=0$ and then $\phi(T) \phi (0)=0$. Taking $T=T_1$ in last equation yields $\phi (0)=0$. Again taking  $T=T_1$ in Equation $(2.2)$ implies $\phi(cI)-aI+\phi (cI)-bI=0$ and therefore $\phi(cI)=\frac{a+b}{2}I$. Thus $ \phi ( \mathbb{C}I) \subseteq \mathbb{C}I$.
Since $\phi$ is bijective and $\phi ^{-1}$ has the all of properties of $\phi$, then we can conclude that $ \phi ( \mathbb{C}I)= \mathbb{C}I$
\end{proof}
\begin{remark}
It is clear that $ \lambda ^{-1} \phi$ satisfies in assumptions on $\phi$ and also by previous lemma is unital. Thus without loss of generality, in the following lemmas, we assume that $ \phi (I)=I$.
\label{2.2}
\end{remark}
\begin{lemma}
$\phi$ preserves the idempotent operators in both directions.
\label{2.3}
\end{lemma}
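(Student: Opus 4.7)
The plan is to exploit the 2-quadratic preservation of $\phi$ by embedding an idempotent $P \in \mathcal{A}$ into $M_2(\mathcal{A})$ as two different block-diagonal operators that are themselves idempotent, and then to compare the resulting polynomial identities on $\phi(P)$.

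More concretely, for an idempotent $P \in \mathcal{A}$ I would first consider
\[
A_1 = \begin{pmatrix} P & 0 \\ 0 & 0 \end{pmatrix}, \qquad A_2 = \begin{pmatrix} P & 0 \\ 0 & I \end{pmatrix},
\]
both of which are idempotent in $M_2(\mathcal{A})$ and hence quadratic. Applying the 2-quadratic preservation to each, $\phi_2(A_1) = \begin{pmatrix} \phi(P) & 0 \\ 0 & 0 \end{pmatrix}$ and $\phi_2(A_2) = \begin{pmatrix} \phi(P) & 0 \\ 0 & I \end{pmatrix}$ are quadratic. Expanding the defining identities block by block, the $(2,2)$-entries force $a_1 b_1 = 0$ (from $A_1$) and $(1-a_2)(1-b_2) = 0$ (from $A_2$); after normalising $a_1 = 0$ and $a_2 = 1$, the $(1,1)$-entries give the two relations
\[
\phi(P)\bigl(\phi(P) - b_1 I\bigr) = 0 \quad \text{and} \quad \bigl(\phi(P) - I\bigr)\bigl(\phi(P) - b_2 I\bigr) = 0.
\]

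Next I would split into two cases according to whether $\phi(P)$ is a scalar multiple of $I$. In the non-scalar case, the minimal polynomial of $\phi(P)$ has degree exactly $2$ and must divide both of the quadratics above, so the unordered root-sets coincide: $\{0, b_1\} = \{1, b_2\}$; this forces $b_1 = 1$, hence $\phi(P)^2 = \phi(P)$, so $\phi(P)$ is idempotent. In the scalar case $\phi(P) \in \mathbb{C}I$, Lemma \ref{2.1} together with the bijectivity of $\phi$ yields $P \in \mathbb{C}I$, so $P \in \{0, I\}$ since these are the only scalar idempotents; the normalisations $\phi(0) = 0$ and $\phi(I) = I$ then give $\phi(P) \in \{0, I\}$, which is again idempotent. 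The reverse direction is immediate from applying the same argument to $\phi^{-1}$, which inherits all the standing assumptions.

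The main obstacle I anticipate is precisely the scalar case: the two polynomial relations by themselves do not determine $\phi(P)$ when it is a scalar, since any scalar trivially satisfies any quadratic with a prescribed root. It is here that Lemma \ref{2.1} becomes indispensable, since it forces scalar images to come only from scalar preimages, reducing the problem to the two trivial idempotents $0$ and $I$.
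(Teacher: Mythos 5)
Your proposal is correct and follows essentially the same route as the paper: the same two block-diagonal embeddings $\operatorname{diag}(P,0)$ and $\operatorname{diag}(P,I)$, the same constraints from the $(2,2)$- and $(1,1)$-entries, and the same use of Lemma \ref{2.1} to dispose of the case $\phi(P)\in\mathbb{C}I$. The only (harmless) difference is cosmetic: you finish by comparing the two monic quadratics via the minimal polynomial of $\phi(P)$, whereas the paper subtracts the two relations to get $(a_2+b_2-a_1-b_1)\phi(P)=a_2b_2I$ and argues from there.
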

\begin{proof}
Let $P$ be an idempotent. If $P=0$ or $I$, then by previous lemma and remark, the assertion is true. So let $P$ be a non-trivial idempotent. It is clear that $ R=\begin{pmatrix}
     P & 0 \\
     0 & 0 \\
   \end{pmatrix}$
    is idempotent. So by assumption and Lemma \ref{2.1}, there exist constants $a_1,b_1$ such that
   $$ (\phi _2(R)- \begin{pmatrix}
     a_1I & 0 \\
     0 & a_1I \\
   \end{pmatrix})( \phi _2(R)- \begin{pmatrix}
     b_1I & 0 \\
     0 & b_1I \\
   \end{pmatrix})=0$$
   $$ \Rightarrow \begin{pmatrix}
     \phi(P)-a_1I & 0 \\
     0 & -a_1I \\
   \end{pmatrix} \begin{pmatrix}
     \phi(P)-b_1I & 0 \\
     0 & -b_1I \\
   \end{pmatrix}=\begin{pmatrix}
     0 & 0 \\
     0 & 0 \\
   \end{pmatrix}$$
   which implies
   \begin{eqnarray*}
 ( \phi(P)-a_1I)( \phi (P)-b_1I)=0,
\end{eqnarray*}
\begin{eqnarray*}
 a_1b_1I=0.
\end{eqnarray*}
These two relations imply
\begin{eqnarray}
 \phi(P)^2=(a_1+b_1)\phi(P).
\end{eqnarray}
It is clear that $ S=\begin{pmatrix}
     P & 0 \\
     0 & I \\
   \end{pmatrix}$ is idempotent. So by assumption and Lemma \ref{2.1}, there exist constants $a_2,b_2$ such that
   $$ (\phi _2(S)- \begin{pmatrix}
     a_2I & 0 \\
     0 & a_2I \\
   \end{pmatrix})( \phi _2(S)- \begin{pmatrix}
     b_2I & 0 \\
     0 & b_2I \\
   \end{pmatrix})=0$$
   $$ \Rightarrow \begin{pmatrix}
     \phi(P)-a_2I & 0 \\
     0 & (1-a_2)I \\
   \end{pmatrix} \begin{pmatrix}
     \phi(P)-b_2I & 0 \\
     0 & (1-b_2)I \\
   \end{pmatrix}=\begin{pmatrix}
     0 & 0 \\
     0 & 0 \\
   \end{pmatrix}$$
   which implies
   \begin{eqnarray}
 ( \phi(P)-a_1I)( \phi (P)-b_1I)=0,
\end{eqnarray}
 \begin{eqnarray}
  (1-a_2)(1-b_2)I=0.
\end{eqnarray}
$(2.4)$ together with $(2.3)$ implies
\begin{eqnarray}
 (a_2+b_2-a_1-b_1)\phi(P)=a_2b_2I.
\end{eqnarray}
We assert that $a_2+b_2-a_1-b_1=0$. Otherwise, $\phi(P)$ is a nonzero multiple of identity which Lemma \ref{2.1} yields that $P$ is a nonzero multiple of identity. This is a contradiction, because $P$ is a non-trivial idempotent.

Thus $a_2+b_2-a_1-b_1=0$ and then by $(2.6)$, $a_2b_2=0$. So from $(2.5)$, $a_2+b_2=1$ which implies $a_1+b_1=1$. This and $(2.3)$ follow the idempotency of $\phi(P)$. Since $\phi$ is bijective and $\phi ^{-1}$ has the all of properties of $\phi$, then we can conclude that $\phi$ preserves the idempotent operators in both directions.
\end{proof}
\begin{lemma}
$\phi$ preserves the orthogonality of idempotent operators in both directions.
\label{2.4}
\end{lemma}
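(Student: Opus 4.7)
The plan is to mirror the template of Lemma \ref{2.3}: encode the orthogonality $PQ = QP = 0$ as a quadratic block matrix in $\mathcal{A} \otimes M_2$, apply $\phi_2$, and read off what the resulting quadratic relation says entrywise. The natural choice here is the nilpotent
$$A = \begin{pmatrix} 0 & P \\ Q & 0 \end{pmatrix},$$
since under the orthogonality hypothesis one computes $A^2 = 0$, so $A$ is quadratic with $a = b = 0$. After reducing to the nontrivial case (the cases where $P$ or $Q$ equals $0$ or $I$ are immediate via Lemmas \ref{2.1} and \ref{2.3}), the 2-quadratic-preserving assumption yields scalars $a, b$ with $(\phi_2(A) - aI)(\phi_2(A) - bI) = 0$, where the diagonal entries of $\phi_2(A)$ vanish by Lemma \ref{2.1}.

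Expanding this equation, the off-diagonal entries give $(a+b)\phi(P) = 0$ and $(a+b)\phi(Q) = 0$. Since $\phi(P)$ and $\phi(Q)$ are nonzero by injectivity, this forces $a + b = 0$, and the diagonal entries collapse to
$$\phi(P)\phi(Q) = \phi(Q)\phi(P) = a^2 I.$$

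The point of care I anticipate is showing $a = 0$: the quadratic equation alone only yields the scalar-valued product above, so one still has to rule out a nonzero scalar. If $a \neq 0$, then $a^{-2}\phi(Q)$ is a right inverse for $\phi(P)$, forcing $\phi(P)$ to be surjective; a surjective idempotent on a Banach space must equal $I$, which via Lemma \ref{2.1} and injectivity contradicts the nontriviality of $P$. Hence $a = 0$ and $\phi(P)\phi(Q) = \phi(Q)\phi(P) = 0$. The reverse implication is automatic because $\phi^{-1}$ inherits all the hypotheses placed on $\phi$.
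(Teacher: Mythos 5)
Your proof is correct and follows essentially the same strategy as the paper: encode the orthogonality $PQ=QP=0$ as a quadratic $2\times 2$ block operator, apply $\phi_2$, use $\phi(0)=0$ to read off the entries, and then rule out a nonzero residual scalar using the idempotency of $\phi(P)$. The only differences are cosmetic: the paper takes the involution $R=\begin{pmatrix} I & P\\ Q & -I\end{pmatrix}$ with $R^2=I$ instead of your nilpotent $\begin{pmatrix} 0 & P\\ Q & 0\end{pmatrix}$, and it eliminates the scalar by left-multiplying $(1-a)(1-b)I+\phi(P)\phi(Q)=0$ by $\phi(P)$ rather than by your right-invertibility observation; both are valid and of comparable length.
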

\begin{proof}
Let $P$ and $Q$ be non-trivial orthogonal idempotents. By Lemmas \ref{2.1} and \ref{2.3}, $\phi (P)$ and $\phi(Q)$ are non-trivial idempotents. So it is enough to show that $\phi(P) \phi(Q)=\phi(Q)\phi(P)=0$.
It is cleat that
$ R^2- \begin{pmatrix}
     I & 0 \\
     0 & I \\
   \end{pmatrix}=0$, where $R=\begin{pmatrix}
     I & P \\
     Q & -I \\
   \end{pmatrix}$. By Lemma \ref{2.1}, there exist a scalar $\alpha$ such that $\phi(-I)= \alpha $.
    So by assumption and Lemma \ref{2.1}, there exist constants $a,b$ such that
   $$ (\phi _2(R)- \begin{pmatrix}
     aI & 0 \\
     0 & aI \\
   \end{pmatrix})( \phi _2(R)- \begin{pmatrix}
     bI & 0 \\
     0 & bI \\
   \end{pmatrix})=0$$
$$ \Rightarrow \begin{pmatrix}
     (1-a)I & \phi(P) \\
     \phi(Q) & (\alpha-a)I \\
   \end{pmatrix} \begin{pmatrix}
     (1-b)I & \phi(P) \\
     \phi(Q) & (\alpha-b)I \\
   \end{pmatrix}=\begin{pmatrix}
     0 & 0 \\
     0 & 0 \\
   \end{pmatrix}$$
   which implies
   \begin{eqnarray}
 ( 1-a)(1-b)I+ \phi(P) \phi(Q)=0,
\end{eqnarray}
\begin{eqnarray}
 ( \alpha-a)(\alpha-b)I+ \phi(Q) \phi(P)=0.
\end{eqnarray}
Multiplying $(2.7)$ from left by $\phi(P)$, we see that
\begin{eqnarray*}
 ( 1-a)(1-b) \phi(P)+ \phi(P) \phi(Q)=0.
\end{eqnarray*}
This equation together with $(2.7)$ entails
\begin{eqnarray*}
 (1-a)(1-b) \phi(P)=( 1-a)(1-b)I.
\end{eqnarray*}
If $( 1-a)(1-b) \neq 0$, then $\phi(P)=I$ which is a contradiction.
Hence $(1-a)(1-b)=0$ and then by $(2.7)$, $\phi(P) \phi(Q)=0$. Using the same method on equation $(2.8)$, follows $\phi(Q) \phi(P)=0$. Thus $\phi$ preserves the orthogonality of idempotents.
Since $\phi$ is bijective and $\phi ^{-1}$ has the all of properties of $\phi$, then we can conclude that $\phi$ preserves the orthogonality of idempotents in both directions.
\end{proof}
\begin{lemma}
$ \phi (P)= APA^{-1}$, for every $P \in \mathcal{P}_{ \mathcal{A}}$, where $A:X\longrightarrow Y$ is a bijective bounded linear operator.
\label{2.5}
\end{lemma}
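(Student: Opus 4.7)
The strategy is to push the preserver data from Lemmas \ref{2.3} and \ref{2.4} through the order structure on idempotents and then invoke a projective-geometry theorem to extract $A$. First, I would characterize the complement $I-P$ purely in terms of idempotents and orthogonality: $I-P$ is the unique idempotent $R$ orthogonal to $P$ such that no non-zero idempotent is orthogonal to both $R$ and $P$. The point is that if $R+P\neq I$, then $E = I-R-P$ is itself a non-zero idempotent in $\mathcal{A}$ orthogonal to both $R$ and $P$, contradicting the characterization. Since $\phi$ preserves idempotents, orthogonality, and the identity (by Lemma \ref{2.1} and Remark \ref{2.2}), this intrinsic description forces $\phi(I-P) = I-\phi(P)$.

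Next, I would observe that the partial order $Q \leq P$ on idempotents (meaning $PQ = QP = Q$) is equivalent to $Q$ being orthogonal to $I-P$. Consequently $\phi$ preserves this order in both directions, and therefore preserves its atoms, which are precisely the rank-one idempotents: indeed, if $P$ has rank at least two, then picking $x\in\mathrm{range}(P)$ nonzero and $f\in\mathcal{X}^{*}$ with $f(x)=1$ produces a rank-one idempotent $Q = x\otimes(f\circ P)$ satisfying $0\neq Q\leq P$ and $Q\neq P$.

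With rank-one idempotents and their orthogonality preserved, I would apply the Banach-space fundamental theorem of projective geometry (or the standard preserver theorem for idempotents in standard operator algebras) to construct a bounded invertible linear, or in the complex case conjugate-linear, bijection $A:\mathcal{X}\to\mathcal{Y}$. The needed input is that two rank-one idempotents $x\otimes f$ and $y\otimes g$ are orthogonal iff $y\in\ker f$ and $x\in\ker g$, which encodes the incidence geometry between projective points and hyperplanes; the theorem then delivers a collineation-inducing $A$ with $\phi(x\otimes f) = A(x\otimes f)A^{-1}$ on all rank-one idempotents. Finally, every $P\in\mathcal{P}_{\mathcal{A}}$ is determined by the order ideal of rank-one idempotents below $P$ (its range) together with the one below $I-P$ (its kernel), and since $\phi$ preserves both ideals the identity $\phi(P)=APA^{-1}$ extends from rank-one to all of $\mathcal{P}_{\mathcal{A}}$.

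The main obstacle is the projective-geometry step: translating the combinatorial preserver data into a semi-linear map and securing the boundedness of $A$. The reductions in the first two paragraphs are elementary manipulations, but invoking (or re-deriving) the correct collineation theorem on a general Banach space, and controlling the linear versus conjugate-linear dichotomy, is the technical heart of the proof.
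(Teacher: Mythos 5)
There is a genuine gap. The paper's proof of this lemma has two parts: first it \emph{cites} a known result (the analogue of Ovchinnikov's theorem for standard operator algebras) stating that a bijection of the idempotents preserving orthogonality in both directions is implemented either as $\phi(P)=APA^{-1}$ or as $\phi(P)=AP^{*}A^{-1}$ for a bounded invertible (conjugate-)linear $A$; second -- and this is the actual content of the paper's argument -- it \emph{rules out the adjoint form} by applying $\phi_2$ to the square root of the identity
$R=\begin{pmatrix} I-x\otimes f & x\otimes g\\ -y\otimes f & I-y\otimes g\end{pmatrix}$
and deriving $abx+y=0$, contradicting the linear independence of $x$ and $y$. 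Your proposal reconstructs (in outline) the first part, which the paper takes off the shelf, but omits the second part entirely: you assert that the projective-geometry machinery ``delivers a collineation-inducing $A$ with $\phi(x\otimes f)=A(x\otimes f)A^{-1}$,'' when in fact the incidence/orthogonality structure on idempotents is invariant under the transpose anti-isomorphism $P\mapsto P^{*}$, so the fundamental theorem of projective geometry necessarily produces a second, dual alternative (a correlation rather than a collineation, i.e.\ $A:\mathcal{X}^{*}\to\mathcal{Y}$ with $\phi(P)=AP^{*}A^{-1}$). No argument using only idempotents, orthogonality, and the induced order can exclude this case, because $P\mapsto P^{*}$ preserves all of that data. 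Excluding it requires going back to the 2-quadratic-preserving hypothesis on a configuration of idempotents that is \emph{not} block-diagonal, which is exactly what the paper does and what your proof is missing.

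Your preliminary reductions are essentially sound: the characterization of $I-P$ as the unique idempotent orthogonal to $P$ admitting no common orthogonal complement works (if $R+P\neq I$ then $I-R-P$ is a nonzero idempotent orthogonal to both), the order $Q\leq P$ is correctly encoded via orthogonality to $I-P$, and the atoms are the rank-one idempotents. But beyond the unaddressed adjoint case, you also leave the two hardest analytic points as acknowledged ``obstacles'' rather than proofs: the boundedness of the semilinear map $A$ and the linear-versus-conjugate-linear dichotomy. Since the paper disposes of all of this by citation and reserves its effort for the adjoint case, a complete write-up along your lines would have to either supply those analytic details or cite the same structural theorem -- and in either case would still need the paper's computation (or an equivalent one) to kill the form $\phi(P)=AP^{*}A^{-1}$.
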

\begin{proof} Lemmas \ref{2.4} implies that $\phi$ is a bijection preserving the orthogonality of idempotents in both directions. It follows from lemma 3.1 in $[10]$ that there exists a bijective bounded linear or (in the complex case) conjugate linear operator $A:X\longrightarrow Y$ such that
$$\phi(P)=APA^{-1} \hspace{.4cm} (P \in \mathcal{P}_{ \mathcal{A}})$$
or a bijective bounded linear or (in the complex case) conjugate linear operator $A:X ^{ \prime}\longrightarrow Y$ such that
$$\phi(P)=AP^*A^{-1} \hspace{.4cm} (P \in \mathcal{P}_{ \mathcal{A}}).$$
We show that the second case can not occur. Assume on the contrary that $\phi(P)=AP^*A^{-1}$ for all $P \in \mathcal{P}_{ \mathcal{A}}$. Let $x,y \in X$ be linearly independent vectors. So there exist $f,g \in X ^{ \prime}$ then we have $f(x)=1$,$f(y)=-1$ and $g(x)=g(y)=1$. Hence we have
$ R^2- \begin{pmatrix}
     I & 0 \\
     0 & I \\
   \end{pmatrix}=0$, where $R=\begin{pmatrix}
     I-x \otimes f & x \otimes g \\
      -y \otimes f & I-y \otimes g \\
    \end{pmatrix} $.
Thus there exist constants $a,b$ such that
   $$ (\phi _2(R)- \begin{pmatrix}
     aI & 0 \\
     0 & aI \\
   \end{pmatrix})( \phi _2(R)- \begin{pmatrix}
     bI & 0 \\
     0 & bI \\
   \end{pmatrix})=0.$$
It is clear that $I-x \otimes f$, $ x \otimes g$, $-y \otimes f$  and $I-y \otimes g $ are idempotents and so
\begin{eqnarray*}ý
 \begin{array}{lcl}ý
  \begin{pmatrix}
A(I-f \otimes x)A^{-1}-aI & A(g \otimes x)A^{-1} \\
      -A(f \otimes y)A^{-1} & A(I-g \otimes y)A^{-1}-aI \\
   \end{pmatrix}\\
   \times \begin{pmatrix}
     A(I-f \otimes x)A^{-1}-bI & A(g \otimes x)A^{-1} \\
      -A(f \otimes y)A^{-1} & A(I-g \otimes y)A^{-1}-bI \\
   \end{pmatrix}=\begin{pmatrix}
     0 & 0 \\
     0 & 0 \\
   \end{pmatrix}
   \end{array}
 ý\end{eqnarray*}
 \begin{eqnarray*}ý
 \begin{array}{lcl}
   \Rightarrow  \begin{pmatrix}
     A & 0 \\
     0 & A \\
   \end{pmatrix} \begin{pmatrix}
     (1-a)I-f \otimes x & g \otimes x \\
      -f \otimes y & (1-a)I-g \otimes y \\
   \end{pmatrix} \begin{pmatrix}
     A^{-1} & 0 \\
     0 & A^{-1} \\
   \end{pmatrix}\\
   \times \begin{pmatrix}
     A & 0 \\
     0 & A \\
   \end{pmatrix} \begin{pmatrix}
     (1-b)I-f \otimes x & g \otimes x \\
      -f \otimes y & (1-b)I-g \otimes y \\
   \end{pmatrix} \begin{pmatrix}
     A^{-1} & 0 \\
     0 & A^{-1} \\
   \end{pmatrix}\\
   =\begin{pmatrix}
     0 & 0 \\
     0 & 0 \\
   \end{pmatrix}
 ý\end{array}
 ý\end{eqnarray*}
 \begin{eqnarray*}ý
 \begin{array}{lcl}
   \Rightarrow  \begin{pmatrix}
     (1-a)I-f \otimes x & g \otimes x \\
      -f \otimes y & (1-a)I-g \otimes y \\
   \end{pmatrix}\\
     \times \begin{pmatrix}
     (1-b)I-f \otimes x & g \otimes x \\
      -f \otimes y & (1-b)I-g \otimes y \\
   \end{pmatrix}
   =\begin{pmatrix}
     0 & 0 \\
     0 & 0 \\
   \end{pmatrix}.
 ý\end{array}
 ý\end{eqnarray*}
One of the equations that can be obtained from this relation is the following equation:
$$((1-a)I-f \otimes x)((1-b)I-f \otimes x)-(g \otimes x)(f \otimes y)=0.$$
This implies
$$(a+b-1)f \otimes x+g \otimes y+(a-1)(b-1)I=0$$
and then
$$(a+b-1)x \otimes f+y \otimes g+(a-1)(b-1)I=0$$
$$ \Rightarrow (a+b-1)(x \otimes f)x+(y \otimes g)x+(a-1)(b-1)x=0$$
$$ \Rightarrow abx+y=0,$$
which is a contradiction, because $x$ and $y$ are linearly independent.
Therefore, the second case can not occur and the proof is completed.
\end{proof}
\begin{lemma}
For every operator $T$, there exists a complex number $ \lambda _T$ such that $ \phi (T)= \lambda _TT $.
\label{2.6}
\end{lemma}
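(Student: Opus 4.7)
After Lemma~\ref{2.5}, my plan is to replace $\phi$ by the map $T \mapsto A^{-1}\phi(T)A$, which inherits all the hypotheses of the theorem and additionally fixes every idempotent in $\mathcal{P}_{\mathcal{A}}$. Under this normalization, the goal becomes showing that $\phi(T)$ is a scalar multiple of $T$ for every $T \in \mathcal{A}$.

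The main device will be to choose several $2\times 2$ block matrices whose quadratic-ness encodes a specific relation between $T$ and an idempotent, and then transfer that relation across $\phi_2$. For any non-trivial idempotent $P$, I would set
$$B_P = \begin{pmatrix} P & T \\ 0 & I-P \end{pmatrix},$$
observe that $B_P(B_P-I)=0$ iff $PT=TP$, and then use 2-quadratic preservation in both directions (together with $\phi(P)=P$, $\phi(I-P)=I-P$) to conclude $PT=TP \Leftrightarrow P\phi(T)=\phi(T)P$. In parallel, I would analyze $\begin{pmatrix} P & T \\ 0 & 0 \end{pmatrix}$ (quadratic iff $PT=T$) and $\begin{pmatrix} I & T \\ 0 & P \end{pmatrix}$ (quadratic iff $TP=0$) to extract $\mathrm{Range}(\phi(T)) \subseteq \mathrm{Range}(P)$ whenever $\mathrm{Range}(T) \subseteq \mathrm{Range}(P)$, together with a dual kernel statement. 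Applying the same reasoning to $\phi^{-1}$ then gives equality of ranges and of kernels whenever those subspaces are ranges of idempotents in $\mathcal{A}$.

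For a rank-one operator $T = x\otimes f$ these preliminaries will suffice: the range constraint forces $\phi(T)=x\otimes h$, and the kernel constraint forces $\ker(h)\supseteq \ker(f)$, giving $h \in \mathbb{C}f$ and hence $\phi(T)=\lambda_T T$. The main obstacle, and where I expect the most work, is extending the conclusion to arbitrary $T$. My plan is to use that the commutation equivalence places $\phi(T)$ in the bicommutant of $T$, so $\phi(T)$ is an algebraic function of $T$; coupling this with the range and kernel agreement, which is very restrictive when demanded for every complemented invariant subspace, should collapse that function to a scalar multiple. Some care will be needed in the infinite-dimensional Banach-space setting, but the standard-operator-algebra assumption supplies enough finite-rank idempotents to make the bicommutant-style argument go through.
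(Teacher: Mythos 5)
Your normalization and your treatment of rank-one operators are sound: the block matrices $\left(\begin{smallmatrix} P & T \\ 0 & 0 \end{smallmatrix}\right)$ and $\left(\begin{smallmatrix} I & T \\ 0 & P \end{smallmatrix}\right)$ do encode $PT=T$ and $TP=0$ for non-trivial idempotents $P$, and transferring these through $\phi_2$ pins $\phi(x\otimes f)$ down to $\mathbb{C}\,x\otimes f$. (The paper instead handles rank one as a degenerate branch of the Bre\v{s}ar--\v{S}emrl theorem, so your route there is a legitimate, arguably cleaner, alternative.)

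The gap is in the passage to general $T$. The constraints you extract --- $\ker\phi(T)=\ker T$, agreement of (closures of) ranges, and commutation with the same idempotents of $\mathcal{A}$ --- do not determine $\phi(T)$ up to a scalar. Two concrete problems. First, the claim that the commutation equivalence places $\phi(T)$ in the bicommutant of $T$ is false: it places $\phi(T)$ in the commutant of the \emph{set of idempotents commuting with $T$}, which can be all of $\mathcal{B(X)}$ (an operator with no non-trivial complemented invariant subspaces commutes with no non-trivial idempotent, and even a nilpotent Jordan block $J_n$ in $M_n$ commutes only with $0$ and $I$ among idempotents). Second, even granting all your constraints, they admit solutions other than $\lambda T$: for $T=J_n$, the operator $S=J_n+J_n^2$ has the same kernel, the same range, satisfies $PS=S$ and $SP=0$ for exactly the same idempotents $P$, and commutes with the same idempotents, yet is not a scalar multiple of $J_n$. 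So ``range and kernel agreement for every complemented invariant subspace'' cannot collapse things to a scalar. What is missing is a device giving \emph{pointwise} linear dependence of $\phi(T)x$ on $Tx$. The paper obtains this by coupling $T$ with an arbitrary rank-one $S=x\otimes f$ (with $f(Tx)=1$) inside the involution $\left(\begin{smallmatrix} I-TS & -T \\ STS-2S & ST-I \end{smallmatrix}\right)$, whose image under $\phi_2$ yields $Tx\otimes f\,\phi(T)=\phi(T)\,x\otimes f\,T$, hence $\phi(T)x\in\mathbb{C}\,Tx$ for every $x$; the Bre\v{s}ar--\v{S}emrl theorem on locally linearly dependent operators then gives $\phi(T)=\lambda_T T$. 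Without some analogue of this step (and of the auxiliary identity $\phi(-T)=-\phi(T)$ needed to evaluate the off-diagonal entries), your argument does not close.
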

\begin{proof} By Lemma \ref{2.5}, $ \phi (P)= APA^{-1}$, for every idempotent operator $P$, where $A:X\longrightarrow Y$ is a bijective bounded linear operator.  It is trivial that without loss of generality, we can suppose that $\phi(P)=P$ for all $P \in \mathcal{P}_{ \mathcal{A}}$. \par For every $T \in \mathcal{A}$ we have
$$
    \begin{pmatrix}
     T & I \\
     I-T^2 & -T \\
    \end{pmatrix} ^2 -  \begin{pmatrix}
     I & 0 \\
     0 & I \\
    \end{pmatrix} = 0.$$
    Thus there exist constants $a,b$ such that
   $$ (\phi _2( \begin{pmatrix}
   T & I \\
     I-T^2 & -T
     \end{pmatrix})- \begin{pmatrix}
     aI & 0 \\
     0 & aI \\
   \end{pmatrix})( \phi _2( \begin{pmatrix}
   T & I \\
     I-T^2 & -T
     \end{pmatrix})- \begin{pmatrix}
     bI & 0 \\
     0 & bI \\
   \end{pmatrix})=0.$$
    One of the equations that can be obtained from this relation is the following equation:
    \begin{eqnarray}
\phi (-T)+ \phi (T)=(a+b)I,
\end{eqnarray}
 which implies $a+b=0$, because otherwise by Lemma \ref{2.1}, there exist a nonzero scalar $t$ such that $\phi (tI)=(a+b)I$. By replacing $tI$ instead $T$ in equation $(2.9)$ , we get $\phi (-tI)=0$ which implies $t=0$, a contradiction. Therefore
 \begin{eqnarray}
\phi (-T)= -\phi (T).
\end{eqnarray}
On the other hand, for any $T,S \in \mathcal{A}$ we have
$$
 \begin{pmatrix}
     I-TS & -T \\
     STS-2S & ST-I \\
   \end{pmatrix} ^2 -  \begin{pmatrix}
     I & 0 \\
     0 & I \\
    \end{pmatrix} = 0.$$
   Hence there exist constants $c,d$ such that
   \begin{eqnarray*}ý
 \begin{array}{lcl}
   (\phi _2( \begin{pmatrix}
   I-TS & -T \\
     STS-2S & ST-I \\
     \end{pmatrix})- \begin{pmatrix}
     cI & 0 \\
     0 & cI \\
   \end{pmatrix})\\
     \times ( \phi _2( \begin{pmatrix}
   I-TS & -T \\
     STS-2S & ST-I \\
     \end{pmatrix})- \begin{pmatrix}
     dI & 0 \\
     0 & dI \\
   \end{pmatrix})=0.
 ý\end{array}
 ý\end{eqnarray*}
   One of the equations that can be obtained from this relation is the following equation:
    $$ [\phi (I-TS)-cI] \phi (-T)+ \phi (-T) [\phi (ST-I)-dI]=0.$$
   Let $T$ be an operator and $x$ be an arbitrary nonzero vector of $X$ such that $x \not \in \ker T$. So there exists a nonzero functional $f$ such that $f(Tx)=1$. If $S=x \otimes f$, then $I-TS$ is an idempotent and so $\phi (I-TS)=I-TS$ and also by $(2.10)$, $\phi (TS-I)=TS-I$.
   Setting $S=x \otimes f$ in previous equation and using $(2.10)$ yields
   \begin{eqnarray*}ý
 \begin{array}{lcl}
   ([(1-c)I-Tx \otimes f] \phi (T)+ \phi (T)[x \otimes fT-(1+d)I]=0\\
     \Rightarrow (c+d) \phi (T)+ Tx \otimes f \phi (T)- \phi (T)x \otimes fT=0,
 ý\end{array}
 ý\end{eqnarray*}
   which putting $T=I$ follows $c+d=0$ and then
 \begin{eqnarray}
 \label{2.11}
Tx \otimes f \phi (T)=\phi (T)x \otimes fT.
\end{eqnarray}
   This implies that $ \phi (T)x$ and $Tx$ are linearly dependent for all $x \in X$ such that $x \not \in \ker T$. So it is clear that $ \phi (T)x$ and $Tx$ are linearly dependent for all $x \in X$. Thus we can conclude from Theorem 2.3 in \cite{1} that there exists either a complex number $ \lambda _T$ such that $ \phi (T)= \lambda _TT $ for all non- rank one operator $T$ or $h,g \in X ^{ \prime}$ such that $T=x \otimes h$ and $ \phi (T)=x \otimes g$.
   By placing in equation \eqref{2.11}, we have
   $$x \otimes g=g(x)f(x)x \otimes h$$
   which implies $g= g(x)f(x)h$ and then for all operator $T$, there exists a complex number $ \lambda _T$ such that $ \phi (T)= \lambda _TT $.
   \end{proof}
\par \vspace{.3cm}
\textbf{Proof of Theorem 1.1.} By Lemma \ref{2.6}, it is enough to prove that $\lambda _T=1$. If $T$ is equal to $0$, $I$ or an idempotent, then by Lemmas \ref{2.1} and \ref{2.5}, we know that $\lambda _T=1$.

For any $T,S \in \mathcal{A}$ we have
$$\begin{pmatrix}
     TS & T \\
     -STS & -ST \\
   \end{pmatrix}^2=0$$
and then there exists $a,b$ such that
$$\begin{pmatrix}
   \phi(TS)-aI & \phi(T) \\
     \phi(-STS) & \phi(-ST)-aI \\
   \end{pmatrix}
   \begin{pmatrix}
   \phi(TS)-bI & \phi(T) \\
     \phi(-STS) & \phi(-ST)-bI \\
   \end{pmatrix}=0$$
   which by Lemma \ref{2.6} and Equation $(2.10)$
  $$ \begin{pmatrix}
     \lambda_{TS}TS-aI & \lambda_{T}T \\
     -\lambda_{STS}STS & -\lambda_{ST}ST-aI \\
   \end{pmatrix}
   \begin{pmatrix}
   \lambda_{TS}TS-bI & \lambda_{T}T \\
     -\lambda_{STS}STS & -\lambda_{ST}ST-bI \\
   \end{pmatrix}=0.$$
   This implies
  \begin{eqnarray}
 \label{2.12}
(\lambda_{TS}TS-aI)(\lambda_{TS}TS-bI)- \lambda_{T}\lambda_{STS}TSTS=0
\end{eqnarray}
and
   \begin{eqnarray}
 \label{2.13}
\lambda_{T}(\lambda_{TS}TS-aI)T-\lambda_{T}T(\lambda_{ST}ST+bI)=0.
\end{eqnarray}
Suppose $x \in \mathcal{X}$ and $f \in \mathcal{X}^ {\prime}$ are arbitrary elements. Hence there exists $y \in \mathcal{X}$ such that $y$ is linearly independent of $x$ and $f(y)=1$ and also there exists $g \in \mathcal{X}^ {\prime}$ such that $g(x)=1$ and $g(y)=1$. Setting $T=x \otimes f$ and $g=y \otimes g$ in Equation $(2.12)$, we get
   \begin{eqnarray*}ý
 \begin{array}{lcl}
   (\lambda_{x \otimes g}x \otimes g-aI)(\lambda_{x \otimes g}x \otimes g-bI)- \lambda_{x \otimes f}\lambda_{y \otimes g}x \otimes g=0\\
    \Rightarrow (1-a-b- \lambda_{x \otimes f}) x \otimes g+ abI=0
 ý\end{array}
 ý\end{eqnarray*}
which implies
\begin{eqnarray}
 \label{2.14}
ab=0,~~~ \lambda_{x \otimes f}=1-a-b.
\end{eqnarray}
Now with the same placement, that is $T=x \otimes f,~~g=y \otimes g$, in Equation $(2.13)$, we get
\begin{eqnarray*}ý
 \begin{array}{lcl}
   \lambda_{x \otimes f}(\lambda_{x \otimes g}x \otimes g-aI)x \otimes f-\lambda_{x \otimes f}x \otimes f(\lambda_{y \otimes f}y \otimes f+bI)=0\\
    \Rightarrow [\lambda_{x \otimes f}(1-a)- \lambda_{x \otimes f}(1+b)] x \otimes f=0
 ý\end{array}
 ý\end{eqnarray*}
which due to being non-zero of $ \lambda_{x \otimes f}$, we have
\begin{eqnarray}
 \label{2.15}
a=-b.
\end{eqnarray}
This together with $ab=0$ from $(2.14)$ implies $a=b=0$ and then again by using $(2.14)$, we obtain $\lambda _T=1$ for $T=x \otimes f$.

Suppose $0 \neq T \in \mathcal{A}$ and $x \in \mathcal{X}$ are arbitrary elements. Thus there exists a functional $f$ such that $f(Tx)=1$. Setting $S=x \otimes f$ in Equation $(2.12)$, we get
\begin{eqnarray*}ý
 \begin{array}{lcl}
   (Tx \otimes f-aI)(Tx \otimes f-bI)- \lambda_{T}Tx \otimes f=0\\
    \Rightarrow (1-a-b- \lambda_{T}) Tx \otimes f+ abI=0
 ý\end{array}
 ý\end{eqnarray*}
which implies
\begin{eqnarray}
 \label{2.16}
ab=0,~~~ \lambda_{x \otimes f}=1-a-b.
\end{eqnarray}
Setting $S=x \otimes f$ in Equation $(2.13)$, we get
\begin{eqnarray*}ý
 \begin{array}{lcl}
   \lambda_{T}(Tx \otimes f-aI)T-\lambda_{T}T(x \otimes fT+bI)=0\\
    \Rightarrow -\lambda_{T}(a+b)T=0
 ý\end{array}
 ý\end{eqnarray*}
which due to being non-zero of $ \lambda_{T}$, we have
\begin{eqnarray}
 \label{2.15}
a=-b.
\end{eqnarray}
This together with $ab=0$ from $(2.16)$ implies $a=b=0$ and then again by using $(2.16)$, we obtain $\lambda _T=1$ for $T\neq 0$.
The proof is complete.
\par \vspace{.3cm}
\textbf{Proof of Theorem 1.2.} Using the result in \cite{9} concerning characterizing
maps on idempotents matrices, the assertion can be proved by a similar
argument as in the proof of Theorem 1.2.

\bibliographystyle{amsplain}

\end{document}